\documentclass[11pt]{amsart}
%\usepackage{pdfsync}

%%%%%%%%%%%
\usepackage{amsmath,amsthm,amssymb,amssymb, paralist, xspace, graphicx, url, amscd, euscript, mathrsfs,stmaryrd,epic,eepic,color}
%%%%%%%%%%%%%%%

%\usepackage{amssymb, xspace, graphicx, url, amscd, euscript, mathrsfs,stmaryrd,epic,eepic,color}

\usepackage[all]{xy}
\usepackage{amsmath,amscd}
\SelectTips{cm}{}

%\CompileMatrices

\usepackage[colorlinks=true]{hyperref}

\usepackage{pstricks}

%%%%%%%%%%%%%%%%%%%%%%%%%%%%%%%%%%%%%%%%%

\numberwithin{equation}{subsection}

\setcounter{tocdepth}1

\numberwithin{subsection}{section}

\allowdisplaybreaks[1]

%%%%%%%%%%%%%%%%%%%%%%%%%%%%%%%%%%%%%%%%%
%

\newtheorem*{namedtheorem}{\theoremname}
\newcommand{\theoremname}{testing}

%%%%%%%%%%
%\newtheorem{maintheorem}{Theorem}

%\newtheorem{theorem}[section]{Theorem}
%\newtheorem{proposition}[subsection]{Proposition}
%\newtheorem{proposition-definition}[subsection]{Proposition-Definition}
%\newtheorem{corollary}[subsection]{Corollary}
%\newtheorem{lemma}[section]{Lemma}

%\theoremstyle{definition}
%\newtheorem{definition}[subsection]{Definition}
%\newtheorem{notation}[subsection]{Notation}
%\newtheorem{example}[subsection]{Example}
%\newtheorem{examples}[subsection]{Examples}
%\newtheorem{remark}[subsection]{Remark}
%\newtheorem{note}[subsection]{Note}
%\newtheorem{problem}[subsection]{Problem}
%\newtheorem{question}[subsection]{question}
%\newtheorem*{pf}{Proof} 

%%%%%%%%%%

%\newtheorem{maintheorem}{Theorem}

\theoremstyle{plain}

\newtheorem{thm}{Theorem}[section]
\newtheorem{proposition}[thm]{Proposition}
\newtheorem{proposition-definition}[thm]{Proposition-Definition}
\newtheorem{lemma-definition}[thm]{Lemma-Definition}

\newtheorem{lemma}[thm]{Lemma}

\theoremstyle{definition}

\newtheorem{notation}[thm]{Notation}

\newtheorem{remark}[thm]{Remark}

\newtheorem{construction}[thm]{Construction}

\theoremstyle{remark}

\numberwithin{thm}{section}

%%%%%%%%%%%%%%%%%%%%%%%%%%%%%%%%%%%%%%%%%

%%%%%%%%%%%%%%%%%%%%%%%%%%%%%%%%%%%%%%%%

 %% evaluation map
 %% log cotangent complex
%\newcommand\LL{\mathbb{L}} %% usual cotangent complex

%%%%%%%%%%%%%%%%%%%%%%%

\def\P{\mathbb{P}}
\def\A{\mathbb{A}}

\def\C{\mathbb{C}}

\newcommand\RR{\mathbb{R}}

%%%%%%%%%%%%%%%%%%%%%%%%%%%%%%

 %field completion

 %complete integer ring

%%%%%%%%%%%%%%%%%%%%%%%%%%%%%%%%%%%%%%%%

  %%% base field

\newcommand\arr{\ifinner\to\else\longrightarrow\fi}

\def\displaytimes_#1{\mathrel{\mathop{\times}\limits_{#1}}}

\def\displayotimes_#1{\mathrel{\mathop{\bigotimes}\limits_{#1}}}

\newcommand\Aut{\operatorname{Aut}}

\newcommand\rank{\operatorname{rank}}

%this defines a dash that will not break,
%but allows the next word to be hyphenated

\newdir{ >}{{}*!/-5pt/@{>}}

\newcommand\doublelong[2]{\mathbin{\xymatrix{{}\ar@<3pt>[r]^{#1}
\ar@<-3pt>[r]_{#2}&}}}

\newlength{\ignora}

%%%%%%%%%%%%%%%%%%%%%%%%%%%%%%%%%

%\newcommand{\red}{_{\mathrm{red}}}

 %% proper transform of the divisor

 %% total space after blow-up

%%%%%%%%%Cones

 % Nef cone

 % cone of log curve classes

%\newcommand{\marg}[1]{\footnote{#1}\marginpar[\hfill\tiny\thefootnote$\rightarrow$]{$\leftarrow$\tiny\thefootnote}}

\newcommand{\Z}{\mathbb{Z}}

\numberwithin{equation}{subsection}

 % compactification of group

%%%%%%%%%%%%%%%%%%%%%%%%%%%%%%%Notations for log scheme

 % degeneration of f

  % coroots

  % coroots

%%%%%%%%%%%%%%%%%For log maps
  %stack of stable log maps

%%%%%%%%%%%%%%%%%%%%%%%%%%%%%%%%%New underlying scheme
 %underlying target

%%%%%%%%%%%%For spherical
 %spherical closure of $H$.

 %log variety associated to the wonderful compactification

 % center of X

%%%%%%%%%%%%%%%%%%%%%

\begin{document}

%%%%%%%%%%%%%%%%%%%%%%%%%%%%%%%%%%%%%%%%%%%%%%%%

\title{Log rationally connected surfaces}

%\author{Qile Chen}

\author{Yi Zhu}

\address[Zhu]{Department of Mathematics\\
University of Utah\\
Room 233\\
155 S 1400 E \\
Salt Lake City, UT 84112\\
U.S.A.}
\email{yzhu@math.utah.edu}

%\thanks{Chen is partially supported by NSF grant DMS-1403271.}

%\subjclass[2010]{??}
%\keywords{???}

%\date{\today}
\begin{abstract}
In this paper, combining the works of Miyanishi-Tsunoda \cite{Miyanishi-T2,Miyanishi-T1} and Keel-McKernan \cite{KM}, we prove the log Castelnuovo's rationality criterion for smooth quasiprojective surfaces over complex numbers. 
%a version of Koll\'ar's theorem over large fields in the logarithmic setting. 

\end{abstract}
\maketitle

%\tableofcontents

%%%%%%%%%%%%%%%%%%%%%%%%%%%%%%%%%%%%%%%%%%%%%%%%Introduction
\section{Introduction}\label{sec:intro}

Iitaka \cite{Iitaka77} proposes a program generalizing the classical theory of complex projective surfaces to open algebraic surfaces using the language of pairs. The classification theory of open algebraic surfaces according to the log Kodaira dimension has been carried out by works of Iitaka \cite{Iitaka81, Iitaka-AG}, Kawamata \cite{Kawamata79}, Miyanishi-Tsunoda \cite{Miyanishi-T2,Miyanishi-T1} and Keel-McKernan \cite{KM}. %Iitaka's philosophy claims that whenever there is a theorem for compact projective varieties, there should be a counter-theorem for smooth quasiprojective varieties after the extension to log pairs.

%The classification theory of open algebraic surfaces 

Open algebraic surfaces with log Kodaira dimension $-\infty$ are particularly interesting. Basic examples are \emph{log ruled (log uniruled)} log surface pairs, i.e., the interior contains a Zariski open subset isomorphic to (dominated by) $\A^1\times T_0$, where $\dim T_0=1$. Keel and McKernan \cite{KM} prove the following log Enriques' ruledness criterion.

\begin{thm}[\cite{KM}]
	A log smooth log surface pair $(X,D)$ is log uniruled if and only if $\kappa(X,D)=-\infty$.
\end{thm}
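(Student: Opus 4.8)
The plan is to prove the two implications separately; the forward implication is elementary, while the reverse one is where the cited structure theory enters. For the forward direction, suppose $(X,D)$ is log uniruled, so that there is a dominant rational map $\A^1\times T_0\dashrightarrow X\setminus D$ with $\dim T_0=1$. I would fix a log smooth projective compactification $(\bar X,\bar D)$ with $X\setminus D=\bar X\setminus\bar D$. The image of a general fiber $\A^1\times\{t\}$ is a rational curve; let $\nu_t\colon\P^1\to\bar X$ denote the induced morphism from its normalization. Since $\A^1$ maps into $\bar X\setminus\bar D$, only the point $\P^1\setminus\A^1$ can map into $\bar D$, so $\deg\nu_t^*\bar D\le 1$. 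As these curves cover a dense subset of $\bar X$, the general member is free, whence $\deg\nu_t^*K_{\bar X}\le -2$, and therefore $\deg\nu_t^*(K_{\bar X}+\bar D)<0$. Thus for every $m>0$ the restriction of $m(K_{\bar X}+\bar D)$ along $\nu_t$ has negative degree and admits no nonzero section, so any $s\in H^0(\bar X,m(K_{\bar X}+\bar D))$ vanishes on a dense family of curves and hence is zero; this gives $\kappa(X,D)=-\infty$.

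For the reverse direction I would combine the structure theory of Miyanishi-Tsunoda \cite{Miyanishi-T2,Miyanishi-T1} with the rational-connectivity results of Keel-McKernan \cite{KM}. Fixing again a log smooth projective compactification $(\bar X,\bar D)$ with $X\setminus D=\bar X\setminus\bar D$, I would run the logarithmic minimal model program for the pair $(\bar X,\bar D)$. Because $\kappa(\bar X,\bar D)=\kappa(X,D)=-\infty$, the program cannot terminate in a log minimal model with $K+D$ nef, so it ends in a log Mori fiber space $(\bar X',\bar D')\to B$. By the classification of such fiber spaces in dimension two, which is essentially the content of the Miyanishi-Tsunoda analysis, there are two cases: either $\dim B=1$ and the general fiber $F\cong\P^1$ satisfies $(K_{\bar X'}+\bar D')\cdot F<0$, or $\dim B=0$ and $(\bar X',\bar D')$ is a log del Pezzo surface.

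In the fibration case the general fiber $F$ satisfies $\bar D'\cdot F\le 1$, so $F$ meets $\bar D'$ in at most one point and $F\setminus\bar D'$ contains a copy of $\A^1$. These affine fibers sweep out a dense subset of the interior and assemble into a dominant map from $\A^1\times T_0$, so $(\bar X',\bar D')$ is log uniruled. In the log del Pezzo case I would invoke the theorem of Keel and McKernan \cite{KM} that a log del Pezzo surface is log rationally connected; in particular its interior is covered by $\A^1$-curves, so it is again log uniruled. Since each step of the program changes $X\setminus D$ only by a birational modification, and log uniruledness is a birational invariant of the interior, it follows in both cases that $(X,D)$ itself is log uniruled.

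The main obstacle is the log del Pezzo case. Unlike the classical situation, producing rational curves is not enough: one must produce $\A^1$-curves, that is, rational curves meeting the boundary in a single point, and controlling this intersection along a family that sweeps out the interior is precisely the delicate analysis carried out by Keel-McKernan for rational curves on log Fano surfaces. A secondary technical point is to verify that each contraction in the log minimal model program, which may contract curves lying in the interior as well as in the boundary, does not alter the birational class of the open surface $X\setminus D$, so that log uniruledness of the output of the program faithfully transfers back to the original pair $(X,D)$.
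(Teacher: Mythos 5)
First, a point of order: the paper does not prove this statement at all --- it is quoted from Keel--McKernan \cite{KM} as an input to the main theorem --- so your proposal can only be measured against the standard arguments, not against a proof in the text. On the merits, both halves have problems. In the forward direction, the step ``$\deg\nu_t^*\bar D\le 1$'' confuses the set-theoretic intersection with the intersection number: an $\A^1$-curve meets $\bar D$ in at most one \emph{point}, but possibly with high multiplicity, so $\deg\nu_t^*\bar D$ can exceed $1$ (the tangent lines to a smooth conic in $\P^2$ already form a dominating family of $\A^1$-curves with $\deg\nu_t^*\bar D=2$). Consequently $\deg\nu_t^*(K_{\bar X}+\bar D)<0$ does not follow from freeness as you claim. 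The standard repair bypasses curves entirely: the log Kodaira dimension is a proper birational invariant of the interior, does not increase under dominant generically finite maps, and is additive on products, so a dominant map $\A^1\times T_0\dashrightarrow U$ forces $\kappa(X,D)=\bar\kappa(U)\le\bar\kappa(\A^1)+\bar\kappa(T_0)=-\infty$.

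In the reverse direction your outline (run the log MMP and reduce to a Mori fiber space over a curve or a rank-one log del Pezzo) is the same strategy the paper itself uses in Section 2 for its main theorem, and the fibration case is fine ($K\cdot F=-2$ and $(K+D)\cdot F<0$ give $D\cdot F\le 1$). But the del Pezzo case rests on a false assertion: it is \emph{not} a theorem of \cite{KM}, nor is it true, that a log del Pezzo surface pair is log rationally connected --- the paper's own Remark~(4) notes that $(\P^2,\{xy=0\})$ is log Fano with infinite fundamental group of the interior, hence not log rationally connected. What is true, and what you actually need, is the dichotomy of Miyanishi--Tsunoda recorded as Theorem~\ref{thm:MT}: a rank-one log del Pezzo pair with nonempty boundary is either log ruled or has interior a Platonic $\A^1_*$-fiber space, i.e.\ a finite quotient of $\A^2\setminus\{0\}$, hence dominated by $\A^1\times\Gm$; the empty-boundary case is Theorem~\ref{thm:KM}. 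Each branch yields log uniruledness, so the argument can be repaired, but as written the key step is wrong. You would also need to justify that the MMP cannot terminate with $K+D$ nef (log abundance for surfaces) and that log uniruledness survives the contractions, which may delete finitely many points from the interior (cf.\ Lemma~\ref{lem:logRC}).
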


A \emph{log rational curve} on a log pair $(X,D)$ is a rational curve $f:\P^1\to X$ which meets $D$ at most once. Inspired by the theory of log rational curves developed by Chen and the author \cite{A1,CZ}, we define that a log pair $(X,D)$ is \emph{log rationally connected} if there exists a log rational curve passing through a general pair of points.

%a general pair of points on $U$ can be connected by the image of an $\A^1$-curve on $U$, i.e., a morphism from $\A^1$ to $U$.

%For log smooth pairs, this definition is equivalent that

In this paper, combining the works of Miyanishi-Tsunoda \cite{Miyanishi-T2},\linebreak \cite{Miyanishi-T1} and Keel-McKernan \cite{KM}, we obtain the following numerical criterion for log rationally connected log surface pairs, which generalizes the classical Castelnuovo's rationality criterion.

\begin{thm}[Log Castelnuovo's criterion]\label{thm:main} The following statements are equivalent for a log smooth log surface pair $(X,D)$:\begin{enumerate}
		\item $(X,D)$ is log rationally connected;
		\item $H^0(X,(\Omega^1_X(\log D))^{\otimes m})=0$, for any $m\ge 1$;
		\item $\kappa(X,D)=-\infty$ and $H^0(X,S^{12}\Omega^1_X(\log D))=0$.
	\end{enumerate}   
\end{thm}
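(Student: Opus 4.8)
The plan is to prove the cycle of implications $(1)\Rightarrow(2)\Rightarrow(3)\Rightarrow(1)$. The implication $(2)\Rightarrow(3)$ is purely formal: since we work in characteristic zero, every Schur functor of a vector bundle is a direct summand of a tensor power of it. In particular $S^{12}\Omega^1_X(\log D)$ is a summand of $(\Omega^1_X(\log D))^{\otimes 12}$, so $(2)$ yields $H^0(X,S^{12}\Omega^1_X(\log D))=0$ at once; and since $m(K_X+D)=(\wedge^2\Omega^1_X(\log D))^{\otimes m}$ is a summand of $(\Omega^1_X(\log D))^{\otimes 2m}$, we get $H^0(X,m(K_X+D))=0$ for all $m\ge 1$, i.e. $\kappa(X,D)=-\infty$. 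The two substantive implications are $(1)\Rightarrow(2)$, resting on the deformation theory of log rational curves of Chen and the author \cite{CZ,A1}, and $(3)\Rightarrow(1)$, which is the heart of the matter and combines the Keel--McKernan log uniruledness criterion quoted above with the fibration structure theory of Miyanishi--Tsunoda.

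For $(1)\Rightarrow(2)$ I would argue as in the projective case. Log rational connectedness produces, through a general pair of points, a log rational curve $f\colon\P^1\to X$ meeting $D$ at most once; by the theory of \cite{CZ,A1} one may take $f$ to be \emph{very free} in the log sense, meaning that $f^*T_X(-\log D)$ is ample. Dually $f^*\Omega^1_X(\log D)\cong\cO_{\P^1}(a)\oplus\cO_{\P^1}(b)$ with $a,b<0$, so every tensor power of it is a sum of negative line bundles and has no nonzero global sections. A nonzero section of $(\Omega^1_X(\log D))^{\otimes m}$ would restrict nontrivially to each such curve; since very free log rational curves sweep out a dense open subset of $X$, the section must vanish identically, giving $(2)$.

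The main work is $(3)\Rightarrow(1)$. Assuming $\kappa(X,D)=-\infty$, the Keel--McKernan criterion shows $(X,D)$ is log uniruled; running the logarithmic minimal model program and invoking Miyanishi--Tsunoda, I would replace $(X,D)$ by a convenient birational model carrying an $\A^1$-fibration $\pi\colon X\to B$ onto a smooth curve, whose general log fibre is $(\P^1,\infty)$. Here nothing is lost, since $H^0(X,S^m\Omega^1_X(\log D))$ depends only on the interior. Filtering $S^m\Omega^1_X(\log D)$ by the relative cotangent sequence, the graded pieces involving the relative direction $\Omega^1_{X/B}$ restrict to negative bundles on the fibres and contribute nothing to $H^0$; the only surviving piece is $\pi^*S^m\Omega^1_B$, twisted along the multiple and non-reduced fibres. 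Pushing forward should identify $H^0(X,S^m\Omega^1_X(\log D))$ with the orbifold pluricanonical group $H^0(B,\lfloor mK_B^{\mathrm{orb}}\rfloor)$, where $K_B^{\mathrm{orb}}=K_B+\sum_i(1-1/m_i)p_i$ records the multiplicities $m_i$ of the multiple fibres.

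With this dictionary the proof reduces to a statement about one-dimensional orbifolds. If $B$ has positive genus, or if the orbifold $(B,K_B^{\mathrm{orb}})$ has nonnegative Kodaira dimension, a direct computation shows that $\lfloor 12K_B^{\mathrm{orb}}\rfloor$ is already effective: for the parabolic orbifolds on $\P^1$, whose cone orders lie in $\{2,3,4,6\}$, the number $12$ clears all denominators, while for the hyperbolic ones the floor becomes effective by degree $12$ as well, the extremal case being $(2,3,7)$. Hence $H^0(X,S^{12}\Omega^1_X(\log D))\neq0$ in all these cases, so contrapositively the hypothesis forces $B\cong\P^1$ carrying a spherical orbifold structure, whose orbifold Kodaira dimension is $-\infty$. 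For such a base nonconstant orbifold maps $\P^1\to(B,K_B^{\mathrm{orb}})$ exist and lift to log rational curves joining two general fibres, yielding log rational connectedness. I expect the main obstacle to be exactly this last descent together with the bookkeeping behind the dictionary: choosing a single birational model on which the $\A^1$-fibration, the horizontal part of $D$, and the multiple fibres are simultaneously controlled, verifying that the floor of the orbifold canonical is correctly computed by symmetric log differentials, and pinning down $12$ as the least common multiple of the parabolic cone orders.
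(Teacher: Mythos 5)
Your cycle of implications and the formal step $(2)\Rightarrow(3)$ are fine, and your $(1)\Rightarrow(2)$ via very free log rational curves is essentially the justification the paper leaves implicit. The orbifold computation at the heart of your $(3)\Rightarrow(1)$ is also, in substance, what the paper does: its Lemmas \ref{lem:A} and \ref{lem:P} construct explicit sections of $S^m\Omega^1_X(\log D)$ ($m=2,3,4,6$) by pulling back tensors of logarithmic one-forms from the base, which is exactly your statement that $\lfloor mK_B^{\mathrm{orb}}\rfloor$ becomes effective unless the base orbifold is spherical, with $12=\lcm(2,3,4,6)$. However, there are two genuine gaps. First, your reduction of the entire problem to an $\A^1$-fibration $\pi\colon X\to B$ is not available: $\kappa(X,D)=-\infty$ gives log \emph{uni}ruledness by Keel--McKernan, not log ruledness, and the end product of the log MMP may be a log del Pezzo surface of Picard number one that is not log ruled. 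These cases (the Platonic $\A^1_*$-fiber spaces $(\A^2-(0))/\Gamma$ with nonempty boundary, and the empty-boundary case) carry no $\A^1$-fibration on any birational model and must be handled separately; the paper disposes of them by quoting the structure theorems of Miyanishi--Tsunoda and Keel--McKernan directly. Your argument as written silently conflates log uniruled with log ruled.

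Second, your concluding step --- ``nonconstant orbifold maps $\P^1\to(B,K_B^{\mathrm{orb}})$ exist and lift to log rational curves joining two general fibres'' --- does not deliver the definition of log rational connectedness, which requires a \emph{single} log rational curve through a general \emph{pair of points} of $X$, not a curve meeting two general fibres (upgrading a chain of $\A^1$-curves and fibre components to one log rational curve is precisely the nontrivial smoothing content of \cite{CZ,A1} and is not free). The paper's mechanism here is different and is the key idea you are missing: after the orbifold base is forced to be spherical, one base-changes along the cyclic or Platonic quotient map $g\colon\P^1\to\P^1/\Gamma$ so that every fibre of the pulled-back family acquires a reduced component, and then invokes strong approximation for $\A^1$ (or $\P^1$) over the function field of the base to produce an integral section passing through two prescribed general points of the total space; this section is the desired log rational curve. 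Without the base change (to create local integral sections everywhere) and the strong approximation input, the descent from ``spherical base orbifold'' to ``log rationally connected'' is unproved.
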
 

%Since log rational curves do not admit pluri log one forms, $(1)$ implies $(2)$.  
%$(1)\Rightarrow(2)\Rightarrow (3)$ is trivial because log rational curves do not admit pluri log one forms. 
By \cite[Prop. 2.7]{A1}, log rational connectedness implies there exists a log rational curve $f:\P^1\to X$ such that $f^*\Omega^1_X(\log D)$ is the sum of line bundles of negative degree. Hence every section of $(\Omega^1_X(\log D))^{\otimes m}$ vanishes along $f(\P^1)$. Since such curves cover a dense open subset of $X$, $(1)$ implies $(2)$. It is obvious that $(2)$ implies $(3)$. For $(3)\Rightarrow (1)$, the idea is to run the log minimal model program for log surface pairs. The end products are classified by Miyanishi-Tsunoda \cite{Miyanishi-T2,Miyanishi-T1} and Keel-McKernan \cite{KM}. And many of them are log rationally connected with the exception of the log-ruled case. The log ruled case is proved in Proposition \ref{prop:main}. In the proof, we discover, somewhat unexpectedly, a close relationship between the non-existence of pluri log one forms and the solutions for the strong approximation problem of the affine line over function fields of complex curves.

%The equivalence of the first two statements in Theorem \ref{thm:main} is . 
\begin{remark}{\ }\begin{enumerate} 
		%\item 
		%
		\item Log rationally connected log pairs are expected to be the natural geometric context for the strong approximation problem over function fields of curves, c.f., \cite{HT-log-Fano,rankone,SA}. 
		
		\item By \cite[p.91]{Iitaka81}, the precise copy of Castelnuovo's rationality criterion fails. There exists a log surface pair $(X,D)$ with $q(X,D)=0$, $h^0(X,\mathcal{O}_X(2(K_X+D)))=0$ but $h^0(X,\mathcal{O}_X(4(K_X+D)))=1$. In fact, a more careful analysis of our proof will show that the condition (3) in Theorem \ref{thm:main} is sharp. 
		%\item Condition (3) is sharp. See Section \ref{sec:eg} for more counterexamples
		
		\item By \cite[Cor. 7.9]{KM}, log rational connectedness implies that the fundamental group of the interior is finite. However, this topological restriction plus negative log Kodaira dimension will not characterize log rational connectedness. See Propositon \ref{prop:e}.
		
		\item The log smooth log Fano surface pair $(\P^2,\{xy=0\})$ has nonvanishing log irregularity and infinite fundamental group of the interior. In particular, it is not log rationally connected. So the log analogue of the theorem of Campana \cite{Campana} and Koll\'ar-Miyaoka-Mori \cite{KMM} fails even for surface pairs.
	\end{enumerate}
	
\end{remark}

%The following question is work in progress.

%\begin{question}[Chenyang Xu]Is every log rationally connected surface pair strongly log rationally connected, i.e., every pair of points can be connected by a very free log rational curve?\end{question}

%\begin{thm}[Castelnuovo's rationality theorem]Let $X$ be a smooth projective surface. Then the following statements are equivalent:\begin{enumerate}\item $X$ is rational;\item $X$ is rationally connected;\item $q(X)=0$ and $\kappa(X)=-\infty$;\item $q(X)=p_2(X)=0$.\end{enumerate}\end{thm}

%In this paper, we prove the following logarithmic version of Castelnuovo's rationality theorem.

\begin{notation}
	In this paper, we work with varieties and log pairs over complex numbers $\C$. A log pair $(X,D)$ means a variety $X$ with a reduced Weil divisor $D$. Let $U$ be its interior $X-D$. We say that $(X,D)$ is \emph{log smooth} if $X$ is smooth and $D$ is a simple normal crossing divisor. A log pair is projective if the ambient variety is projective.
	
	For a log smooth pair $(X,D)$, we use $\kappa(X,D)$ to denote the logarithmic Kodaira dimension and $q(X,D)$ to denote the logarithmic irregularity, i.e., $q(X,D)=h^0(X,\Omega_X^1(\log D))$. They only depend on the interior of the pair.

\end{notation}

\section{Log minimal model program}
Let $(X,D)$ be a projective log smooth surface pair with $\kappa(X,D)=-\infty$. By \cite[Theorem 3.47]{Kollar-Mori}, we run the log minimal model program on this pair
$$(X,D)=(X_0,D_0)\to(X_1,D_1)\to\cdots\to (X_k,D_k)=(X^*,D^*),$$
such that: 
\begin{enumerate}
	\item each step is a divisorial contraction;
	\item the log Kodaira dimension remains the same, i.e., $\kappa(X_i,D_i)=-\infty$;
	\item the end product $(X^*,D^*)$ is either \begin{enumerate}
		\item log ruled, or 
		\item a log del Pezzo surface of Picard number one, i.e., $\rho(X^*)=1$.
	\end{enumerate}
\end{enumerate}

\begin{lemma}\label{lem:logRC}
	If $(X^*,D^*)$ is log ruled (log rationally connected), so is $(X,D)$.
\end{lemma}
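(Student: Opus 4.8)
\emph{Reduction to a single contraction.} The plan is to induct on the number $k$ of divisorial contractions in the chain, so that it suffices to treat one divisorial contraction $\phi\colon(X,D)\to(X',D')$, where $(X',D')$ denotes the next (smaller) pair in the program and is assumed log ruled, respectively log rationally connected. Here $\phi$ is a birational morphism of surfaces which is an isomorphism away from its exceptional curve, and which contracts that curve to a finite set of points. In particular $\phi$ is an isomorphism over a dense open subset, so a general point of $X$ corresponds to a general point of $X'$, and every curve $C'\subset X'$ has a strict transform $C\subset X$ with $C\to C'$ birational; in particular $C$ is rational whenever $C'$ is.

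\emph{The log rationally connected case.} Given two general points $x_1,x_2\in X$, I push them to general points $x_1',x_2'\in X'$ and use log rational connectedness of $(X',D')$ to produce a log rational curve $C'$ through $x_1',x_2'$, i.e.\ a rational curve meeting $D'$ in at most one point. Its strict transform $C\subset X$ passes through $x_1,x_2$, and it remains only to bound $C\cap D$. Over the isomorphism locus of $\phi$ the intersections of $C$ with $D$ match those of $C'$ with $D'$. The strict transform can acquire a \emph{new} boundary intersection only along a component of $D$ that is contracted by $\phi$, and it does so exactly when $C'$ passes through the (finitely many) image points of the contracted boundary components; meeting an exceptional curve that lies in the interior $X\setminus D$ is harmless. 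I therefore choose $C'$ in general position so as to avoid this finite set of points, and then $C$ meets $D$ in at most one point, so $(X,D)$ is log rationally connected.

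\emph{The log ruled case.} Let $\A^1\times T_0\subset X'\setminus D'$ be a Zariski open subset witnessing log ruledness of $(X',D')$. Each of the finitely many special points of $X'$ (the images of the exceptional curve, together with the images of any contracted boundary components) lies in at most one fibre $\A^1\times\{t\}$; let $S\subset T_0$ be the resulting finite set of parameters. Then $\A^1\times(T_0\setminus S)$ still has full affine-line fibres, is again of the form $\A^1\times(\text{smooth curve})$, and is carried isomorphically into $X\setminus D$ by $\phi^{-1}$, because it avoids the locus over which $\phi$ fails to be an isomorphism of interiors. Hence $(X,D)$ is log ruled. Replacing ``isomorphic to'' by ``dominated by'' throughout gives the corresponding statement for log uniruledness.

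\emph{The main difficulty.} The birational bookkeeping above is essentially formal; the real content is the general-position step in the log rationally connected case. When a divisorial contraction contracts a boundary component to a point $p$ in the interior of $X'$, one has $X\setminus D\cong(X'\setminus D')\setminus\{p\}$, and I must guarantee that the connecting log rational curves can be chosen to miss the fixed point $p$. This requires more than the bare existence of a single connecting curve: I need the connecting log rational curves to deform in a family large enough that, since $\{p\}$ has codimension two, a general member avoids it. Establishing this moving property for connecting log rational curves is the crux of the argument.
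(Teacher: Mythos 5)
Your argument is essentially the paper's: both reduce the lemma to the observation that each divisorial contraction changes the interior only by removing a pure codimension-one subset upstairs and finitely many points downstairs, so that log ruledness, respectively log rational connectedness, need only be shown stable under deleting a finite set of interior points. The general-position step you single out as the crux is exactly the point the paper also leaves implicit (it simply asserts that log rational connectedness ``remains after deleting finitely many points''), so your proposal matches the paper's proof in both substance and level of detail.
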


\begin{proof} At each step of the log MMP, we contract a curve representing a $(K_X+D)$-negative extremal ray. Such curve could be contracted to either a boundary point or an interior point. Thus after deleting $U:=X-D$ with a pure codimension one subset, it is isomorphic to a dense open $V^*$ of $U^*:=X^*-D^*$ such that $\dim(U^*-V^*)=0$. The lemma follows from log ruledness (log rational connectedness) remains after deleting finitely many points.
\end{proof}

When $(X,D)$ is a log del Pezzo surface pair, we recall the following two fundamental theorems established by Miyanishi-Tsunoda \cite{Miyanishi-T2,Miyanishi-T1} and Keel-McKernan \cite{KM}.

\begin{thm}\cite[Ch.2, Thm 5.1.2]{Miyanishi}\label{thm:MT}
	Let $(X,D\neq\emptyset)$ be a log del Pezzo surface pair of Picard number one with nonempty boundary. If $(X,D)$ is not log ruled, then the interior $U:=X-D$ is a Platonic $\A^1_*$-fiber space, i.e., $U$ is isomorphic to $\A^2-(0)$ modulo a noncyclic small finite subgroup of $GL(2,\C)$. In particular, $(X,D)$ is log rationally connected.
\end{thm}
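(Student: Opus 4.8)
The plan is to read off the structure of $U$ from the classification of open surfaces of negative log Kodaira dimension. Since $(X,D)$ is log del Pezzo, the divisor $-(K_X+D)$ is ample, so in particular $\kappa(X,D)=-\infty$. By the Miyanishi--Tsunoda structure theory \cite{Miyanishi-T1,Miyanishi-T2}, a smooth quasiprojective surface of log Kodaira dimension $-\infty$ admits either an $\A^1$-fibration or an $\A^1_*$-fibration over a smooth curve. An $\A^1$-fibration would exhibit $U$, and hence $(X,D)$, as log ruled, which is excluded by hypothesis. Therefore I may assume that $U$ carries an $\A^1_*$-fibration $\pi\colon U\to B$, whose general fiber is $\GG_m$.

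Next I would pin down the base and the degenerate fibers using the hypothesis $\rho(X)=1$. Since $\rho(X)=1$ and $D$ is a nonzero effective divisor, $D$ is ample and the Picard lattice of a minimal log smooth completion leaves almost no freedom: passing to a relatively minimal SNC model of $\pi$ and contracting back down to $X$, the Hodge-index bookkeeping forces $B\cong\P^1$ and records the remaining data as a finite list of multiple fibers of $\pi$ with multiplicities $(m_1,\dots,m_r)$. The rigidity of a rank-one Picard group, together with the positivity of $-(K_X+D)$, pins this down to exactly three multiple fibers.

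The heart of the matter, and the step I expect to be the main obstacle, is to show that the triple $(m_1,m_2,m_3)$ is \emph{Platonic}, i.e.\ $1/m_1+1/m_2+1/m_3>1$, and that the associated group is noncyclic. Here the ampleness of $-(K_X+D)$ enters as an orbifold-positivity (spherical) condition on the base $\P^1$ carrying the three orbifold points, which rules out the Euclidean and hyperbolic triples and leaves exactly the dihedral, tetrahedral, octahedral and icosahedral types $(2,2,n)$, $(2,3,3)$, $(2,3,4)$, $(2,3,5)$. The noncyclic conclusion is then forced by matching the dual graph of $D$ in the minimal resolution against the classification of small subgroups of $GL(2,\C)$ \cite{Miyanishi}: the cyclic (scalar) quotients of $\A^2\setminus\{0\}$ already carry a linear pencil descending to an $\A^1$-fibration and so are log ruled, hence excluded. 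Reconstructing $U$ then amounts to identifying $\pi\colon U\to\P^1$ with the quotient of the tautological $\GG_m$-bundle $\A^2\setminus\{0\}\to\P^1$ by the binary polyhedral group $G\subset GL(2,\C)$ attached to the Platonic triple, giving $U\cong(\A^2\setminus\{0\})/G$ as claimed.

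Finally, the concluding assertion is immediate once the quotient description is in hand. On $\A^2\setminus\{0\}$ a general affine line avoiding the origin is a copy of $\A^1$ that, in any SNC completion, meets the boundary only at infinity, hence is a log rational curve, and a general pair of points lies on such a line; so $\A^2\setminus\{0\}$ is log rationally connected. Since $G$ is small it acts freely in codimension one, so the finite quotient map $\A^2\setminus\{0\}\to U$ is \'etale away from a subset of codimension at least two, and the images of these lines are log rational curves joining a general pair of points of $U$. Thus $(X,D)$ is log rationally connected.
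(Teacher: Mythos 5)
The paper does not actually prove this statement: it is imported wholesale as \cite{Miyanishi}, Ch.~2, Thm~5.1.2, a deep classification theorem of Miyanishi--Tsunoda, and the only content the paper itself adds is the closing sentence (``in particular, $(X,D)$ is log rationally connected''), which it leaves implicit. Measured against that, your reconstruction of the classification has genuine gaps. The opening reduction --- that a non-log-ruled quasiprojective surface of log Kodaira dimension $-\infty$ carries an $\A^1_*$-fibration --- is itself the hard Miyanishi--Tsunoda theorem that the citation encapsulates, so invoking ``the structure theory'' at that point is circular as a proof of the cited result. Further on, ``Hodge-index bookkeeping forces $B\cong\P^1$'' and ``the rigidity of a rank-one Picard group \dots pins this down to exactly three multiple fibers'' are assertions rather than arguments: one must actually control the boundary configuration of a relatively minimal completion of the fibration, and the exclusion of $\le 2$ multiple fibers and of cyclic groups goes through showing those cases are affine-ruled, which you argue only for \emph{scalar} cyclic groups --- a small cyclic subgroup of $GL(2,\C)$ need not be scalar (e.g.\ generated by $\mathrm{diag}(\zeta,\zeta^a)$ with $a\not\equiv\pm1$). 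The spherical-triple restriction via orbifold positivity of the base is the right idea but needs a log canonical bundle formula to become a proof. In short, your outline tracks the architecture of Miyanishi's actual argument, but as written it is an outline, and the honest move here (and the one the paper makes) is to cite the theorem.

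By contrast, your final paragraph supplies exactly the one step the paper omits, and it is essentially correct; it can even be sharpened. Since $G$ is small, every nontrivial element of $G$ fixes only the origin (a nontrivial linear map fixing a line pointwise is a quasi-reflection), so $G$ acts freely on $\A^2\setminus\{0\}$ and the quotient map to $U$ is finite \'etale everywhere, not merely in codimension one. A general pair of points of $U$ lifts to a pair of points of $\A^2\setminus\{0\}$ joined by a line missing the origin; that line is closed in $\A^2\setminus\{0\}$, so its image is a closed curve in $U$ dominated by $\A^1$, whose normalized closure in $X$ is a $\P^1$ meeting $D$ in a single point --- a log rational curve through the given pair. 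This correctly yields the ``in particular'' clause.
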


\begin{thm}\cite[Theorem 1.6]{KM}\label{thm:KM}
	Let $(X,D=\emptyset)$ be a log del Pezzo surface pair with empty boundary. Then the smooth locus of $X^{sm}$ is rationally connected.  
\end{thm}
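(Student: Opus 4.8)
The plan is to reinterpret the statement concretely: since $X$ is a normal projective surface with klt singularities and $-K_X$ ample, and klt surface singularities are exactly quotient singularities, $\operatorname{Sing}(X)$ is a \emph{finite} set of points and $X^{sm}=X\setminus\operatorname{Sing}(X)$. Rational connectedness of $X^{sm}$ should mean that through two general points $x,y\in X^{sm}$ there is a proper rational curve $C\subset X$ with $x,y\in C$ and $C\cap\operatorname{Sing}(X)=\emptyset$. The first thing to notice is why this is subtle: on the minimal resolution $\pi\colon\tX\to X$ one has $K_{\tX}+\Delta=\pi^{*}K_{X}$ with $\Delta=\sum(-a_i)E_i$ effective and supported on $E=\pi^{-1}(\operatorname{Sing}(X))$, coefficients in $[0,1)$ by the klt condition; thus $-(K_{\tX}+\Delta)=-\pi^{*}K_X$ is nef and big, so $\tX$ (hence $X$) is a smooth rational surface. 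But on $\tX$ the bad locus $E$ is a \emph{divisor}, and a connecting rational curve on $\tX$ generically meets $E$; avoiding a divisor is not something that freeness alone provides, so $\tX$ is the wrong model for the avoidance problem.

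The right model is the canonical Deligne--Mumford stack $\cX$ of $X$: it is smooth and proper, its coarse space is $X$, the coarse map $\pi\colon\cX\to X$ is an isomorphism over $X^{sm}$, and its non-trivial-stabilizer locus is the \emph{finite} set lying over $\operatorname{Sing}(X)$. By construction of the canonical stack for quotient singularities one has $K_{\cX}=\pi^{*}K_X$, so $-K_{\cX}=-\pi^{*}K_X$ is ample and $\cX$ is a smooth Fano stack. I would then run bend-and-break (twisted Mori theory) on $\cX$: the ampleness of $-K_{\cX}$ forces an abundance of low-degree twisted rational curves sweeping out $\cX$, from which one assembles and smooths, through two general points, a \emph{very free} twisted rational curve $f$ with $f^{*}T_{\cX}$ ample. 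In particular $\cX$, and a fortiori $X$, is rationally connected; the content of the theorem is the stronger statement for the honest open locus $X^{sm}$.

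The key step, and the main obstacle, is to arrange the connecting curve to avoid the finite stacky locus. Because that locus now has codimension two, a sufficiently positive very free curve $f$ has deformations fixing $x$ and $y$ that form a positive-dimensional family not forced through any prescribed point; hence a general such deformation misses all the stacky points and descends to an honest proper rational curve in $X^{sm}$ joining $x$ and $y$. This is exactly the avoidance that the resolution model could not supply, and it is the technical heart of the argument: one must establish rational connectedness together with very-freeness on the Fano stack and then control the intersection of the moving curves with the stacky locus uniformly. In \cite{KM} this is carried out by a surface-specific analysis of $(K_X+\Delta)$-negative extremal rays rather than in stacky language, but the mechanism — producing enough low-degree rational curves from the positivity of $-K_X$ and threading them through general points while skirting the finitely many singular points — is the same.
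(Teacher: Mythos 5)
This theorem is not proved in the paper at all: it is imported verbatim as \cite[Theorem 1.6]{KM}, one of the two external pillars (together with Miyanishi--Tsunoda) on which the whole argument rests. So the relevant comparison is between your sketch and the actual proof in Keel--McKernan, and there the gap is substantial. Your reduction steps are fine and standard: klt surface singularities are quotient singularities, so $\operatorname{Sing}(X)$ is finite; the minimal resolution shows $X$ is a rational surface; and passing to the canonical Deligne--Mumford stack $\cX$ with $K_{\cX}=\pi^{*}K_X$ ample is a legitimate reformulation. But everything after that --- ``run bend-and-break on $\cX$'', ``assemble and smooth a very free twisted rational curve through two general points'', ``a general deformation misses the stacky points'' --- is precisely the content of the theorem, asserted rather than proved. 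Rational connectedness of $X$ itself is trivial here ($X$ is a rational surface); the entire difficulty is confining the connecting curves to $X^{sm}$, and your sketch does not supply the mechanism that does this.

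Two specific points fail or are unjustified. First, bend-and-break on a smooth Fano DM stack produces rational curves, but it gives no control over whether they are \emph{twisted}, i.e.\ whether they pass through the stacky points; producing even one free rational curve contained in $X^{sm}$ is the hard step, and no soft positivity argument is known to deliver it (the higher-dimensional analogue was a long-standing open conjecture precisely because this step resists the standard Mori-theoretic machinery). Second, the final avoidance step is not a routine deformation argument: a twisted rational curve with nontrivial stabilizers at some points cannot simply be ``deformed off'' the stacky locus inside its own deformation space, because the moduli of twisted stable maps stratifies by the stabilizer/age data at the orbifold points --- an untwisted curve lies on a different component, whose nonemptiness is exactly what must be shown. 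Keel--McKernan's actual proof proceeds by a long, surface-specific analysis (their theory of ``tigers'' and a detailed study of $(K_X+\Delta)$-negative extremal contractions and the resulting net of rational curves), not by the orbifold Fano heuristic. Your last sentence concedes this, but the concession does not close the gap: as written, the proposal restates the theorem in stacky language and defers its proof.
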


\begin{proof}[Proof of Theorem \ref*{thm:main}]
	If $(X^*,D^*)$ a log del Pezzo surface pair of Picard number one and not log ruled, the theorem follows from Theorem \ref{thm:MT}, \ref{thm:KM} and Lemma \ref{lem:logRC}. If $(X^*,D^*)$ is log ruled, then by Lemma \ref{lem:logRC} again, $(X,D)$ is log ruled. This case will be proved in Proposition \ref{prop:main}.
\end{proof}

\section{Log ruled case}

%\begin{lemma}\cite[Prop. 11.3 and 11.4]{Iitaka-AG}\label{lem:proper-bir}
%Both the logarithmic Kodaira dimension and logarithmic irregularity are proper birational invariants.\qed
%\end{lemma}

\begin{lemma}\label{lem:extend-T}
	Let $(X,D)$ a projective log smooth surface pair. If $(X,D)$ is log ruled and let $f:X\dashrightarrow T$ be a rational map to a smooth projective curve $T$, then there exists a birational morphism $(X',D')\to (X,D)$ extending $f$ and $f|_{X'-D'}$ is proper onto the image $X-D$.
\end{lemma}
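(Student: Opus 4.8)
The plan is to prove the statement by \emph{eliminating the indeterminacy} of $f$ and then choosing the boundary on the modification so that the interior is preserved. Since $X$ is a smooth projective surface and $T$ a smooth projective curve, $f\colon X\dashrightarrow T$ is a morphism away from a finite set of points, namely the base points of the pencil defining $f$. By the classical elimination of indeterminacy for rational maps from a smooth surface — resolving the base locus of this pencil by successively blowing up its base points and their infinitely near points — I obtain a finite sequence of point blow-ups
$$\sigma\colon X'\longrightarrow X$$
such that the composite $f'\eqdef f\circ\sigma\colon X'\to T$ is a \emph{morphism}. The termination is standard: each blow-up at a base point strictly lowers the multiplicity of the pencil there, so after finitely many steps the pencil is base-point free.

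Next I would fix the boundary. Set $D'\eqdef\sigma^{-1}(D)_{\mathrm{red}}$, the reduced total transform: this consists of the strict transform of $D$ together with exactly those $\sigma$-exceptional curves lying over $D$, while the exceptional curves lying over interior points of $U\eqdef X-D$ are deliberately \emph{not} included in $D'$. After finitely many further blow-ups centered on the boundary I may assume $D'$ is simple normal crossing, so that $(X',D')$ is again log smooth. By construction $\sigma\colon(X',D')\to(X,D)$ is then a birational morphism of log smooth pairs, and $f'$ extends $f$.

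It then remains to identify the interior and check properness. Because the components of $D'$ are only the strict transform of $D$ and the exceptional curves over $D$, we have $D'=\sigma^{-1}(D)$ as sets, whence
$$X'-D'=X'-\sigma^{-1}(D)=\sigma^{-1}(X-D)=\sigma^{-1}(U).$$
Since $\sigma$ is projective, hence proper, and $X'-D'$ is precisely the preimage of the open set $X-D$, the restriction $\sigma|_{X'-D'}\colon X'-D'\to X-D$ is proper and surjective; this is the asserted properness onto the image $X-D$.

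The step I expect to require the most care is the \emph{bookkeeping of exceptional curves}: one must distinguish the base points (and their infinitely near points) lying in the interior from those lying on the boundary, adding to $D'$ only the latter. This is exactly what forces $\sigma^{-1}(U)=X'-D'$ and therefore the properness of the induced map on interiors. Note that the blow-ups over interior points enlarge $U$ by complete rational curves but keep the pair log ruled, so the modified pair still lies in the relevant class, while the blow-ups needed for the simple-normal-crossing condition are centered over the boundary and hence do not disturb $\sigma^{-1}(U)$.
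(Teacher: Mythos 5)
Your proposal is correct and follows essentially the same route as the paper: resolve the indeterminacy by point blow-ups and take the (reduced) total transform of the boundary at each stage, so that the interior of the new pair is exactly the preimage of $U=X-D$ and properness follows from properness of the blow-up morphism. The only difference is cosmetic — the paper phrases this step by step for each intermediate blow-up, while you do the bookkeeping all at once (and your extra blow-ups for the snc condition are in fact unnecessary, since the reduced total transform of an snc divisor under point blow-ups on a surface is already snc).
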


\proof To resolve the indeterminacy, we need to take a sequence of blow ups. For each intermediate blow up $g_i:X_{i+1}\to (X_i,D_i)$, we take $D_{i+1}:=g_i^{-1}(D_i)$. Then it is clear that $g_i|_{X_{i+1}-D_{i+1}}$ is proper onto the image $X_i-D_i$.\qed 

\begin{proposition}\label{prop:main}
	If $(X,D)$ is log ruled and $H^0(X,S^{12}\Omega^1_X(\log D))=0$, then $(X,D)$ is log rationally connected.
\end{proposition}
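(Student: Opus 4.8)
The plan is to reduce the statement to the geometry of the $\A^1$-fibration underlying the log ruling, and then to solve the resulting connectedness problem by strong approximation for the affine line over the function field of the base.

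First I would pass to a convenient birational model. Both hypotheses are log birational invariants: the vanishing $H^0(X,S^{12}\Omega^1_X(\log D))=0$ because symmetric logarithmic differentials are a log birational invariant of the pair, and log rational connectedness by the argument proving Lemma \ref{lem:logRC}. Hence I may apply Lemma \ref{lem:extend-T} to the rational map $f\colon X\dashrightarrow T$ defining the ruling and assume from the outset that $f\colon X\to T$ is a morphism to a smooth projective curve $T$ with general fibre $\P^1$. Log ruledness means the general fibre meets $D$ in at most one point; the subcase of zero points (so $U\to T$ is a $\P^1$-fibration) is handled identically with sections of the $\P^1$-bundle, so I assume exactly one point. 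Then the horizontal part of $D$ is a single section $D_\infty$, and the generic fibre of $f$ is a conic over $K:=\C(T)$ carrying the $K$-point $D_\infty$, so it is $\P^1_K$ and the generic fibre of the interior is $U_\eta\cong\A^1_K$. Thus $f$ is an $\A^1$-fibration with section at infinity $D_\infty$, having finitely many degenerate (reducible or multiple) fibres, with all vertical boundary contained in them.

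Second I would pin down the base. Let $B\subset T$ be the finite set of points $b$ for which the whole fibre $F_b$ lies in $D$; these are exactly the points at which $f^{*}$ of a logarithmic one-form with a pole at $b$ is again logarithmic on $(X,D)$, since the polar divisor of $f^{*}\tfrac{dt}{t-b}=\tfrac{d(f-b)}{f-b}$ is the reduced fibre $F_b^{\mathrm{red}}$. Pullback of twelfth powers therefore gives an injection
\[
H^0\!\big(T,\Omega^1_T(\log B)^{\otimes 12}\big)\ \into\ H^0\!\big(X,S^{12}\Omega^1_X(\log D)\big)=0 .
\]
As $\deg\Omega^1_T(\log B)=2g(T)-2+|B|$, this forces $2g(T)-2+|B|<0$, i.e. $T\cong\P^1$ and $|B|\le 1$. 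I fix $\infty\in T$ to be the unique point of $B$ if $B\ne\emptyset$, and any point with smooth fibre otherwise; then over $\A^1:=T\setminus\{\infty\}$ no fibre of $f$ is contained in $D$, so interior points lying over $\A^1$ can be forced to meet the boundary only over $\infty$.

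Third, which is the heart of the matter, I would produce the connecting curves by strong approximation. A general pair $p,q\in U$ lies over a general pair $t_1\ne t_2$ in $\A^1$, away from the degenerate fibres. I seek a section $s$ of $f$ over $\A^1$ landing in the interior (avoiding $D$ over all of $\A^1$) with $s(t_1)=p$, $s(t_2)=q$: its closure $\bar s\subset X$ is a rational curve meeting $D$ only in the single point $\bar s\cap F_\infty$, hence a log rational curve through $p$ and $q$, giving log rational connectedness. In a fibre coordinate such a section is an element $\varphi\in K=\C(t)$; landing in the interior over $\A^1$ away from the degenerate fibres makes $\varphi$ integral there (a polynomial), while passing over the finitely many reducible or multiple fibres without meeting their vertical $D$-components imposes finitely many congruence conditions on $\varphi$. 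Together with the interpolation conditions $\varphi(t_i)=$ (coordinate of $p,q$), this is exactly a strong approximation problem for $\A^1=\A^1_K$ over $K=\C(t)$ with distinguished place $\infty$, which I solve by producing one polynomial satisfying the congruences and correcting it by a multiple of $\prod_i (t-b_i)^N$ to achieve the two prescribed values.

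The main obstacle I expect is the third step, specifically the bookkeeping at the degenerate fibres over $\A^1$. One must describe the integral model of $U_\eta\cong\A^1_K$ over $\C[t]$ near the reducible and multiple fibres, check that ``$s$ avoids $D$ and passes through the interior component'' translates into a finite set of congruences compatible with strong approximation, and verify that a multiple fibre over $\A^1$ — which, as the computation of $B$ shows, contributes nothing to $H^0(X,S^{12}\Omega^1_X(\log D))$ and is therefore not constrained by the hypothesis — is nevertheless harmless because strong approximation for the affine line tolerates arbitrarily many finite congruence conditions. This dichotomy between what the pluri-log-forms see (the full-boundary fibres, which govern the base) and what they do not (the interior degenerate fibres, governed by strong approximation) is precisely the unexpected link advertised in the introduction.
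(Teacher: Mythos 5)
There is a genuine gap in your third step, precisely at the point you flag as ``the main obstacle'' and then dismiss. A fibre of $f|_U$ over a point $p\in\A^1$ that contains \emph{no reduced component} is not a congruence condition on the section $\varphi$ --- it is an outright local obstruction. A section $s$ of $f$ satisfies $\deg s^*f^*(p)=1$, so it must pass through a multiplicity-one component of the fibre $F_p$; if every interior component of $F_p$ has multiplicity $\ge 2$, the only reduced components (if any) lie in $D$, and hence \emph{no} section avoiding $D$ over $\A^1$ exists at all. Strong approximation cannot help here: it glues prescribed local integral points into a global one, but at such a place there is no local integral point to prescribe. Your parenthetical claim that these multiple fibres ``contribute nothing to $H^0(X,S^{12}\Omega^1_X(\log D))$'' is also false, and in fact inverts the logic of the paper: if $E=(t^e=0)$ with $e\ge 2$ is a component of the fibre over $0$ and the fibre over $1$ is likewise non-reduced, then $f^*\bigl(\frac{dz}{z}\otimes\frac{dz}{z-1}\bigr)=e\,t^{e-2}\,dt\otimes dt$ is regular along $E$, and such products produce nonzero global sections of $S^m\Omega^1_X(\log D)$ for small $m$. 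This is the \emph{only} place the hypothesis with exponent $12$ is used.

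What your argument is missing is therefore the entire middle of the paper's proof: Lemmas \ref{lem:A} and \ref{lem:P}, which use the vanishing of $S^{12}\Omega^1_X(\log D)$ to bound the number of non-reduced interior fibres by $1$ (base $(\P^1,\{\infty\})$) or by $3$ with multiplicities restricted to the platonic triples $(2,2,n)$, $(2,3,3)$, $(2,3,4)$, $(2,3,5)$ (base $(\P^1,\emptyset)$); and the subsequent base-change lemma, which pulls back along a cyclic cover $z\mapsto z^n$ or the quotient map $\P^1\to\P^1/\Gamma$ for the corresponding finite noncyclic $\Gamma\le PGL(2,\C)$, so that afterwards \emph{every} interior fibre has a reduced component. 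Only then does the strong approximation argument (your step three, which otherwise matches the paper's) go through, with log rational connectedness descending along the surjection $U'\to U$. Your first two steps (extending $f$ to a morphism and pinning down the base as $(\P^1,\emptyset)$ or $(\P^1,\{\infty\})$) agree with the paper.
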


\proof By \cite[Prop. 11.3]{Iitaka-AG} and Lemma \ref{lem:extend-T}, we may assume that the rational map $(X,D)\to T_0$ extends to a proper flat morphism $f:(X,D)\to T$, where $T$ is a smooth projective curve containing $T_0$. The log ruling on the generic fiber implies that $f(U)$ is nonempty open in $T$. Let $S$ be the complement of $f(U)$ in $T$. By \cite[2.1.17]{Miyanishi}, we have a morphism of pairs
$$f:(X,D)\to (T,S),$$
which induces an injection $$H^0(T, \Omega_T^1(\log S))\to H^0(X, \Omega_X^1(\log D)).$$
Thus, by assumption, ${q}(T,S)=0$. The classification of log curves implies that the pair $(T,S)$ is isomorphic to either $(\P^1,\emptyset)$ or $(\P^1,\{\infty\})$. 

\begin{notation}
	Let $p_1,\cdots,p_k$ be the points on $T-S$ whose inverse image $f|_U^{-1}(p_i)$ contains no reduced component. Let $d_i$ be the minimal multiplicity of all irreducible components of $f|_U^{-1}(p_i)$. We may assume that $d_1\le \cdots\le d_k$.
\end{notation}

\begin{lemma}\label{lem:A}
	If the base $(T,S)$ is isomorphic to $(\P^1,\{\infty\})$ and $$H^0(X,S^2\Omega^1_X(\log D))=0,$$ then $k\le 1$.
\end{lemma}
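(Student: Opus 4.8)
The plan is to prove the contrapositive: assuming $k\geq 2$, I will exhibit a nonzero global section of $S^2\Omega^1_X(\log D)$, contradicting the hypothesis. Write $t$ for the affine coordinate on $T-S=\A^1$ and $s=1/t$ for the local coordinate at $\infty\in S$. The starting point is the rational section
$$\theta=\frac{(dt)^{\otimes 2}}{\prod_{i=1}^{k}(t-p_i)}$$
of $S^2\Omega^1_T(\log S)$ on the base. Since $dt$ is a nowhere-vanishing section of $\Omega^1_T(\log S)$ on $\A^1$ with a simple pole at $\infty$ (indeed $dt=-s^{-1}(ds/s)$), a direct divisor computation gives $\operatorname{div}(\theta)=(k-2)[\infty]-\sum_{i=1}^{k}[p_i]$ inside $S^2\Omega^1_T(\log S)$.

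Next I pull $\theta$ back along $f$. As in the proof of Proposition \ref{prop:main}, the morphism of pairs $f\colon (X,D)\to(T,S)$ induces $f^*S^2\Omega^1_T(\log S)\to S^2\Omega^1_X(\log D)$, so over the locus where $\theta$ is a regular section of the base sheaf --- that is, away from the points $p_i$ --- the pullback $f^*\theta$ is automatically a regular section of $S^2\Omega^1_X(\log D)$, any poles being confined to the log-allowed divisor $D$. The one thing to check is the behavior along the fibers $f|_U^{-1}(p_i)$, where $\theta$ has a simple pole on the base; this is the crux. Fix a component $C$ of such a fiber, of multiplicity $m\geq d_i\geq 2$ (every component is non-reduced by the definition of $p_i$), and choose a local equation $u$ for $C$ at a general point, so that $t-p_i=u^m\cdot(\text{unit})$. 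Then $dt=u^{m-1}\eta$ with $\eta$ part of a local frame, whence $f^*\theta=u^{m-2}\cdot(\text{unit})\cdot\eta^{\otimes 2}$, which is regular precisely because $m\geq 2$. The same computation at a node $C\cap C'$, writing $t-p_i=u^{m}w^{m'}\cdot(\text{unit})$, yields a factor $u^{m-2}w^{m'-2}$ and hence regularity there as well; at points where the fiber meets $D$ the resulting log poles are permitted. Thus the simple pole of $\theta$ at each $p_i$ is exactly absorbed by the non-reducedness of $f|_U^{-1}(p_i)$ --- and it is no accident that the symmetric power matches the multiplicity threshold $2$ --- so $f^*\theta$ extends regularly over every multiple fiber.

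Finally, along $f^{-1}(\infty)\subseteq D$ the section $\theta$ has a zero of order $k-2$; when $k\geq 2$ this is nonnegative, so $\theta$ is a regular section of $S^2\Omega^1_T(\log S)$ near $\infty$ and its image $f^*\theta$ is a regular section of $S^2\Omega^1_X(\log D)$ near $f^{-1}(\infty)$. Combining the three local analyses, $f^*\theta$ is a global section of $S^2\Omega^1_X(\log D)$, and it is nonzero since $f$ is dominant and $\theta\neq 0$. This contradicts $H^0(X,S^2\Omega^1_X(\log D))=0$, forcing $k\leq 1$. I expect the main obstacle to be the local verification at the multiple fibers, especially at the nodes of a fiber and at its intersections with $D$; everywhere else the functorial pullback map attached to the morphism of pairs does the work.
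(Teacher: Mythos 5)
Your proof is correct and is essentially the paper's own argument: the paper pulls back the tensor $\frac{dz}{z}\otimes\frac{dz}{z-1}$ (your $\theta$ with only the two factors $(t-p_1)(t-p_2)$ in the denominator, which already suffices for any $k\ge 2$) and performs the same local computation $f^*\bigl(\frac{dz}{z}\otimes\frac{dz}{z-1}\bigr)=e\,t^{e-2}\,dt\otimes dt$ along the non-reduced fibers to produce a nonzero section of $S^2\Omega^1_X(\log D)$. Your extra verifications at the nodes of a fiber and along components contained in $D$ are details the paper leaves implicit.
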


\proof If $k\ge 2$, we may assume that $p_1=0$ and $p_2=1$. Let $z$ be the coordinate on $\P^1-\{\infty\}$. Since any irreducible component $E$ of $f|_U^{-1}(p_i)$ has the form $(t^e=0)$ where $e\ge 2$, the pullback of the tensor 
$$f^*\left(\frac{dz}{z}\otimes \frac{dz}{z-1}\right)=e\frac{dt}{t}\otimes t^{e-1}dt=et^{e-2}dt\otimes dt$$
is regular on $E$.
Thus, we have $$0\neq f^*\left(\frac{dz}{z}\otimes \frac{dz}{z-1}\right)\in H^0(X,S^2\Omega^1_X(\log D)),$$ which contradicts the assumption.\qed 

\begin{lemma}\label{lem:P}
	If the base $(T,S)$ is isomorphic to $(\P^1,\emptyset)$ and $$H^0(X,S^{12}\Omega^1_X(\log D))=0,$$ then we have the following:
	\begin{enumerate}
		\item $k=0$, i.e., each fiber contains a reduced component;
		\item $k=1$ and $d_1=n\ge 2$;
		\item $k=2$ and $d_1,d_2\ge 2$;
		\item $k=3$ and $(d_1,d_2,d_3)$ are one of the following triples:
		\begin{enumerate}
			\item $(2,2,n\ge2)$;
			\item $(2,3,3)$;
			\item $(2,3,4)$;
			\item $(2,3,5)$.
		\end{enumerate}
	\end{enumerate}
\end{lemma}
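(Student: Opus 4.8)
The plan is to run the same idea as in Lemma~\ref{lem:A}, using the multiple fibers over the $p_i$ to cancel the poles of a meromorphic pluri-differential pulled back from $T$. Since now $S=\emptyset$, there are no genuine logarithmic one-forms available on the base, so the whole construction must be driven by the fractional defect $1-1/d_i$ contributed by each multiple fiber, and the exponent $12$ will be forced on us. I would prove the contrapositive: if $(k;d_1,\dots,d_k)$ is none of the configurations (1)--(4), I produce a nonzero section of $S^{12}\Omega^1_X(\log D)$. Normalising the coordinate $z$ so that all $p_i$ are finite and the fiber over $\infty$ has a reduced interior component, the candidate is the pullback of
\[
\omega=\prod_{i=1}^k (z-p_i)^{-b_i}\,(dz)^{\otimes 12},\qquad b_i=\lfloor 12(1-1/d_i)\rfloor=12-\lceil 12/d_i\rceil .
\]

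The first step is the codimension-one local analysis showing $f^*\omega\in H^0(X,S^{12}\Omega^1_X(\log D))$ whenever $\sum_i b_i\ge 24$. Along an interior component $E$ of $f|_U^{-1}(p_i)$ of multiplicity $d$, writing $f^*(z-p_i)=t^d\cdot(\mathrm{unit})$ with $E=(t=0)$, the pullback $f^*(dz)$ vanishes to order $d-1$, so $(dz)^{\otimes 12}$ vanishes to order $12(d-1)$ while $(z-p_i)^{-b_i}$ has pole order $b_id$; regularity along $E$ is exactly $b_i\le 12(1-1/d)$, and the binding constraint is the smallest multiplicity $d=d_i$, giving $b_i\le\lfloor 12(1-1/d_i)\rfloor$ and leaving the higher-multiplicity components automatically regular. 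Along a vertical component contained in $D$ the same computation produces at worst a logarithmic pole, allowed because $b_i\le 12$; along horizontal components of $D$ the pullback is regular since a general point maps to a general value $z_0\neq p_i$; and over a general fiber and over $\infty$ regularity amounts to $\operatorname{ord}_\infty\omega=\sum_i b_i-24\ge 0$. Thus $f^*\omega$ is a nonzero logarithmic symmetric $12$-differential precisely when $\sum_i b_i\ge 24$.

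It then remains to verify the numerical statement that $\sum_{i=1}^k\lfloor 12(1-1/d_i)\rfloor\ge 24$ for every configuration outside the list. Using $\lfloor 12(1-1/d_i)\rfloor=12-\lceil 12/d_i\rceil$, this is $12(k-2)\ge\sum_i\lceil 12/d_i\rceil$. For $k\ge 4$ it is automatic, since $d_i\ge 2$ gives $\lceil 12/d_i\rceil\le 6$ and hence $\sum_i\lceil 12/d_i\rceil\le 6k\le 12(k-2)$. For $k=3$ it reads $\sum_i\lceil 12/d_i\rceil\le 12$, and a short check over $d_1\le d_2\le d_3$ shows this holds for every triple except the four Platonic families $(2,2,n),(2,3,3),(2,3,4),(2,3,5)$; the threshold is realised by the three Euclidean triples $(2,3,6),(2,4,4),(3,3,3)$, each of whose entries divides $12$, so that the sum is exactly $12$. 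For $k\le 2$ the left-hand side is $\le 0$ while the right-hand side is positive, so these configurations are never detected---consistent with (1)--(3) being unconstrained. Feeding this into the hypothesis $H^0(X,S^{12}\Omega^1_X(\log D))=0$ forces the configuration to lie on the list.

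The main obstacle is the local analysis of the second step: the fibers over the $p_i$ may be reducible and may meet or contain components of $D$, so one must verify in each codimension-one stratum---interior multiple-fiber components, vertical boundary components, horizontal boundary, and the general and infinite fibers---that $f^*\omega$ acquires only the logarithmic poles permitted by $S^{12}\Omega^1_X(\log D)$, and that the operative bound is indeed set by the minimal multiplicity $d_i$. The conceptual heart is the role of the exponent $12=\operatorname{lcm}(2,3,4,6)$: it is exactly because the Euclidean orders $2,3,4,6$ all divide $12$ that the weight $m=12$ sits precisely at the threshold $\sum_i\lfloor 12(1-1/d_i)\rfloor=24$ in the Euclidean cases, and therefore simultaneously detects all Euclidean and hyperbolic configurations while missing every spherical (Platonic) one---this is what makes a single universal weight suffice and explains the appearance of $12$ in the hypothesis.
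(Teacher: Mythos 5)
Your proof is correct, and the underlying mechanism is the same as the paper's: pull back a pluri-logarithmic differential from the base $\P^1$ and use the multiplicities of the fibers over the $p_i$ to cancel its poles along the interior components, so that a non-listed configuration produces a nonzero section contradicting the hypothesis. The implementation, however, is genuinely different. The paper argues case by case, exhibiting for each bad configuration a hand-picked product of one-forms $dz/z$, $dz/(z-1)$, $dz/(z(z-1))$ in symmetric degree $2$, $3$, $4$ or $6$ (each dividing $12$); you instead use a single uniform weight-$12$ form $\prod_i(z-p_i)^{-b_i}(dz)^{\otimes 12}$ with $b_i=\lfloor 12(1-1/d_i)\rfloor$ and reduce everything to the one inequality $\sum_i\lceil 12/d_i\rceil\le 12(k-2)$. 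This buys three things: the codimension-one local analysis (interior components, vertical and horizontal boundary, the fiber at infinity) is done once rather than per case; the combinatorics becomes a transparent floor-function computation that isolates exactly the spherical triples and explains why $12=\lcm(2,3,4,6)$ is the right weight; and, as a bonus, it sidesteps a slip in the paper's third bullet, where the displayed element $(\frac{dz}{z})^{\otimes 3}\otimes(\frac{dz}{z-1})^{\otimes 2}\otimes\frac{dz}{z(z-1)}$ for $(d_1,d_2,d_3)=(2,3,\ge 6)$ actually has a pole of order $2$ along the multiplicity-two component over $0$ (the exponents $3$ and $2$ need to be interchanged). The one point you should make fully explicit is the passage from regularity in codimension one to global regularity, which is immediate since $X$ is smooth and $S^{12}\Omega^1_X(\log D)$ is locally free; with that remark your argument is complete.
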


\proof 
If $k\ge 4$, let $p_1=0, p_2=1, p_3=\infty, p_4=c$. By the same argument as in Lemma \ref{lem:A}, the pullback 
$$f^*\left(\frac{dz}{z}\otimes \frac{dz}{(z-1)(z-c)}\right)$$
gives a nonzero element in $H^0(X,S^2\Omega^1_X(\log D))$.

Now we may assume that $k=3$ and $p_1=0, p_2=1, p_3=\infty$. Consider the pullback of one forms and we obtain the following table.
\begin{center}
	\begin{tabular}{| l || c | c | c|}
		\hline
		order at & $0$& $1$& $\infty$\\ \hline \hline
		$f^*(\frac{dz}{z})$ & $-1$ & $d_2-1$ & $-1$\\ \hline
		$f^*(\frac{dz}{z-1})$ & $d_1-1$ & $-1$ &$-1$\\ \hline
		$f^*(\frac{dz}{z(z-1)})$ & $-1$ & $-1$ &$d_3-1$\\
		\hline
	\end{tabular}
\end{center}

With the same argument as in Lemma \ref{lem:A}, we obtain the following:\begin{itemize}
	\item If $d_1\ge 3$, then $$0\neq f^*\left(\frac{dz}{z}\otimes\frac{dz}{z-1}\otimes\frac{dz}{z(z-1)}\right)\in H^0(X,S^3\Omega^1_X(\log D)).$$
	\item If $d_1=2, d_2\ge 4$, then $$0\neq f^*\left(\frac{dz}{z}\otimes\left(\frac{dz}{z-1}\right)^{\otimes 2}\otimes\frac{dz}{z(z-1)}\right)\in H^0(X,S^4\Omega^1_X(\log D)).$$
	\item If $d_1=2, d_2=3, d_3\ge 6$, then 
	$$0\neq f^*\left(\left(\frac{dz}{z}\right)^{\otimes 3}\otimes(\frac{dz}{z-1})^{\otimes 2}\otimes\frac{dz}{z(z-1)}\right)\in H^0(X,S^6\Omega^1_X(\log D)).$$
	
\end{itemize}

All above cases contradict with $H^0(X,S^{12}\Omega^1_X(\log D))=0$. Therefore the lemma is proved. \qed

%$$f^*(\frac{dz}{z}\otimes\frac{dz}{z-1}\otimes\frac{dz}{z(z-1)})$$

\begin{lemma}
	With the same notations as above, assume that $$H^0(X,S^{12}\Omega^1_X(\log D))=0.$$
	Then there exists a finite cover $g:\P^1\to \P^1$ such that after taking the base change
	\begin{equation*}
	\xymatrix{
		(X',D') \ar[r]^{g'} \ar[d]^{f'} & (X,D)\ar[d]^f \\
		(T'=\P^1,S'=g^{-1}(S)) \ar[r]^-g & (T,S),
	}
	\end{equation*}
	the fiber $U'_t=X'_t-D'_t$ contains a reduced component for every $t\in T'-S'$. 
\end{lemma}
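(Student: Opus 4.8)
The plan is to reduce, via the multiplicity classification of Lemmas \ref{lem:A} and \ref{lem:P}, to an explicit finite list of \emph{spherical} configurations for the datum $(d_1,\dots,d_k)$, and then for each such configuration to exhibit a finite cover $g\colon\P^1\to\P^1$ whose ramification index over every $p_i$ is divisible by $d_i$. Everything rests on the purely local observation that such a base change absorbs the multiplicity of a fibre. If $E\subset f|_U^{-1}(p_i)$ is a component of multiplicity $d$, then near the generic point of $E$ the map $f$ reads $s=t^{d}$ in analytic-local coordinates, and after the base change $s=u^{e}$ the normalization of $\Spec\CC[[t,u]]/(t^{d}-u^{e})$ splits into $\gcd(d,e)$ smooth branches, on each of which the new fibre coordinate $u$ vanishes to order $d/\gcd(d,e)$. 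Hence the image of $E$ in $X'$ becomes reduced exactly when $d\mid e$. Applying this to the component realizing the \emph{minimal} multiplicity $d_i$ shows that as soon as $d_i\mid e$ for the ramification index $e$ of $g$ over $p_i$, every fibre $U'_t$ with $t\in g^{-1}(p_i)$ acquires a reduced component; the same computation with $d=1$ shows reduced components stay reduced, so no new multiple fibres are created over the already reduced fibres of $f$. Since $T'-S'=g^{-1}(T-S)$, this yields the conclusion for every $t\in T'-S'$, and note that we only ever need \emph{one} reduced component per fibre, so the minimal-multiplicity component suffices.

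It then remains to build, for each configuration, a genus-zero cover with the required ramification. When $k\le 2$ I would place the marked points among $\{0,\infty\}$ and take $g(z)=z^{N}$ with $N=\lcm(d_1,\dots,d_k)$: this is totally ramified of index $N$ over both $0$ and $\infty$, so $d_i\mid N$ at each $p_i$, while every other fibre is pulled back \'etale-locally and stays reduced. (In the base $(\P^1,\{\infty\})$ one has $k\le 1$ and the extra ramification falls over $\infty\in S'$, which is irrelevant; and the fact that the spindle/teardrop orbifolds $(d_1,d_2)$ with $d_1\ne d_2$ and $(d_1)$ are \emph{bad}, hence admit no orbifold-\'etale cover, does not matter, since we are allowed extra ramification at the second point.) When $k=3$, Lemma \ref{lem:P} limits $(d_1,d_2,d_3)$ to the spherical triples $(2,2,n)$, $(2,3,3)$, $(2,3,4)$, $(2,3,5)$, which are precisely the ramification data of the quotient maps $\P^1\to\P^1/\Gamma\cong\P^1$ for $\Gamma\subset\aut(\P^1)$ dihedral, tetrahedral, octahedral, and icosahedral respectively. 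Precomposing the target with a M\"obius transformation carrying the three branch points to $p_1,p_2,p_3$ gives $g\colon\P^1\to\P^1$ ramified to order exactly $d_i$ over $p_i$, so in particular $d_i\mid e$ there. Equivalently, one may invoke the Riemann existence theorem: the spherical inequality $\sum_i(1-1/d_i)<2$ is exactly what forces the cover realizing these local monodromies to have genus $0$.

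The step I expect to be the main obstacle is the base-change bookkeeping rather than the construction of $g$. The fibre product $X\times_T T'$ is in general non-normal along the multiple fibres, so $(X',D')$ must be taken to be its normalization with $D'$ the reduced preimage of $D$; I would need to check that normalization does not destroy the reduced component produced by the local computation and that $D'$ still meets each fibre compatibly with the induced fibration $f'\colon(X',D')\to(T',S')$. A secondary point to verify carefully is that $T'$ is genuinely $\P^1$ rather than a higher-genus curve — which is guaranteed by the spherical classification through Riemann--Hurwitz — together with the fact that in the $k=3$ cases $g$ ramifies \emph{only} over $p_1,p_2,p_3$, so that all remaining fibres are unaffected and stay reduced after base change.
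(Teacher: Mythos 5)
Your proposal is correct and follows essentially the same route as the paper: cyclic covers $z\mapsto z^N$ for $k\le 2$ and the Platonic quotient maps $\P^1\to\P^1/\Gamma$ realizing the spherical triples $(2,2,n)$, $(2,3,3)$, $(2,3,4)$, $(2,3,5)$ when $k=3$. The local computation showing that a base change with $d_i\mid e$ renders the minimal-multiplicity component reduced (while keeping reduced components reduced), together with the normalization of the fibre product, is precisely the bookkeeping the paper leaves implicit, and your treatment of it is sound.
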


\begin{proof} 
	When $k\le 2$, we can choose $g$ as a cyclic cover $z\mapsto z^n$. Now by Lemma \ref{lem:A} and \ref{lem:P}, the only cases left are when the base is $(\P^1, \emptyset)$ and $k=3$. On the other hand, every such case corresponds to a finite noncyclic subgroup $$\Gamma \le \Aut(\P^1)=PGL(2,\C)\cong SO(3;\RR)$$ as in \cite[Chapter 5, Theorem 9.1]{Artin-alg}.  
	The natural quotient map $$g:\P^1\to\P^1/\Gamma\cong \P^1$$ totally ramifies over three points on the target, say, $p_1, p_2, p_3$, such that the ramification order of every preimage of $p_i$ is $d_i$, for $i=1,2,3$. The lemma follows by taking base change via the map $g:\P^1\to\P^1/\Gamma$. 
\end{proof}

Now let us return to the proof of Proposition \ref{prop:main}. Let $U'$ be the interior of $(X',D')$. By construction, $f'$ is a morphism of log pairs. And every fiber of $f'|_{U'}$ contains a reduced component, i.e., there exists local integral section over any point in $T-S$. 

Note that $f'$ gives an integral model of $\P^1$ or $\A^1$ over the function field $\C(\P^1)$. Since strong approximation holds for $\P^1$ or $\A^1$ over the function field of curves \cite[Theorem 6.13]{Rosen}, a general pair of points in $U'$ can be connected by an integral section, which is either a rational curve or an $\A^1$-curve. The log rational connectedness of $U$ follows because $U'$ maps surjectively to $U$.\qed

\section{An example}

\begin{proposition}\label{prop:e}
	There exists an log ruled surface $(X,D)$ such that 
	\begin{enumerate}
		\item $\pi_1(X-D)$ is trivial;
		\item $q(X,D)=0$;
		\item $\kappa(X,D)=-\infty$;
		\item $(X,D)$ is not log rationally connected.
	\end{enumerate}
\end{proposition}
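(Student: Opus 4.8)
The plan is to exhibit an explicit log ruled surface carrying \emph{two} double fibres, so that the quadratic log differential of Lemma \ref{lem:A} survives and forces non--log rational connectedness, while arranging the interior to be simply connected. Concretely I will construct a projective log smooth pair $(X,D)$ with a proper fibration $f\colon (X,D)\to(\P^1,\{\infty\})$ whose generic fibre meets $D$ in a single point (so the interior fibration is an $\A^1$-fibration and $(X,D)$ is log ruled), having exactly two multiple fibres, over $z=0$ and $z=1$, each of minimal multiplicity $d_1=d_2=2$. I would start from a Hirzebruch surface $\FF_n\to\P^1$ with its negative section, blow up the fibres over $0$ and $1$ in the standard way producing, in each, a chain of rational curves whose distinguished interior component has multiplicity $2$, and take $D$ to be the union of the section at infinity, the fibre over $\infty$, and the boundary components of the two degenerate fibres (leaving the multiplicity-two components in the interior $U=X\setminus D$).

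Granting the construction, properties (2)--(4) are immediate. For (3), $(X,D)$ is log ruled, hence $\kappa(X,D)=-\infty$ by the Keel--McKernan criterion quoted in the introduction. For (2), the log cotangent sequence of $f$ reads $0\to f^*\Omega^1_{\P^1}(\log\{\infty\})\to\Omega^1_X(\log D)\to Q\to 0$ with $Q$ the relative log quotient; a global log one-form restricts on the generic fibre $\A^1=\P^1\setminus\{\mathrm{pt}\}$ to a section of $\Omega^1_{\P^1}(\log\{\mathrm{pt}\})=\O(-1)$, which vanishes, so it is pulled back from the base, and $H^0(\P^1,\Omega^1_{\P^1}(\log\{\infty\}))=H^0(\O(-1))=0$; hence $q(X,D)=0$. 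For (4), the computation in Lemma \ref{lem:A} applies verbatim with $p_1=0$, $p_2=1$ of multiplicity $2$: the pullback $f^*\!\big(\tfrac{dz}{z}\otimes\tfrac{dz}{z-1}\big)$ is a nonzero element of $H^0(X,S^2\Omega^1_X(\log D))$, so by the implication $(1)\Rightarrow(2)$ of Theorem \ref{thm:main} the pair is not log rationally connected.

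The main obstacle is property (1), the triviality of $\pi_1(U)$. Over $\A^1\setminus\{0,1\}=f(U)\setminus\{0,1\}$ the interior fibration is an $\A^1$-bundle with contractible fibre, so $\pi_1$ of that part is the free group $F_2$ on the two meridians $\gamma_1,\gamma_2$. Adding back the multiple fibres imposes, by van Kampen, relations $\gamma_i=\prod_j\mu_{\Gamma_{i,j}}^{m_{i,j}}$ expressing each meridian as a weighted product of the meridians of the components of $f^{-1}(p_i)$; the meridian of any component lying in the interior is null-homotopic, so only the boundary components survive. The delicate point, which I expect to be the crux, is that a careless choice of degenerate fibre (for instance the chain with self-intersections $-2$, interior $-1$, $-2$) leaves an isolated boundary $(-2)$-curve and hence a $\ZZ/2$ in $H_1(U)$. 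To get $\pi_1(U)=1$ one must instead choose the degenerate-fibre configurations together with the self-intersection of the section so that the intersection matrix of $D$ is unimodular, which forces $H_1(U)=\mathrm{coker}\big(H_2(X)\to\ZZ^{\#D}\big)=0$, and then upgrade to $\pi_1(U)=1$ by a Zariski--van Kampen argument using that $X$ is a simply connected rational surface and $D$ is a tree of rational curves. Verifying that such a unimodular boundary can be realized simultaneously with the two prescribed double fibres is the real content of the construction.
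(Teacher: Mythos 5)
Your items (2), (3), (4) are handled essentially as in the paper (the paper gets (4) from the proof of Lemma \ref{lem:A} and (2) from the log Albanese together with (1); your direct argument via the relative log cotangent sequence is fine), but the construction you propose cannot satisfy (1), and the point you flag as ``the real content'' is in fact an impossibility. If each degenerate fibre has a \emph{single} interior component $E$ of multiplicity $2$, write the fibre class as $F=\sum_j m_j\Gamma_j+2E$ with all $\Gamma_j\subset D$. Since $H_1(X)=0$, one has $H_1(U)\cong\operatorname{coker}\bigl(\pic(X)\to\Z^{\#D},\ L\mapsto(L\cdot D_i)\bigr)$, and because the fibre over $\infty$ also lies in $D$, the class $F_\infty-\sum_j m_j\Gamma_j=2E$ lies in $2\pic(X)$. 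As the intersection form of the rational surface $X$ is unimodular, the vector $(c_i)\in(\Z/2)^{\#D}$ of mod-$2$ coefficients of $F_\infty-\sum_j m_j\Gamma_j$ defines a nonzero functional $\Z^{\#D}\to\Z/2$ that kills every $(L\cdot D_i)_i$; it is nonzero since the coefficient of $F_\infty$ is $1$ (and even without $F_\infty\subset D$, $F\cdot(\text{section})=1$ forces some $m_j$ odd). Hence $H_1(U)$ contains $\Z/2$ for \emph{every} choice of chain configurations and of the section's self-intersection: no unimodular boundary exists, and $\pi_1(U)$ can never be trivial. (A toy computation with $\FF_0$ blown up twice gives the $4\times4$ intersection matrix of determinant $-2$ you were worried about; the point is that this determinant obstruction is intrinsic to ``interior part of the fibre divisible by $2$,'' not to a careless choice.)

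The paper's construction evades exactly this: each of the two modified fibres of $\A^1\times\A^1\to\A^1$ is replaced by a \emph{disjoint union of a triple $\A^1$ and a double $\A^1$}, i.e.\ two interior components of coprime multiplicities $2$ and $3$. The minimal multiplicity is still $d_i=2\ge2$, so no reduced component exists and the pullback $f^*\bigl(\frac{dz}{z}\otimes\frac{dz}{z-1}\bigr)$ from the proof of Lemma \ref{lem:A} is again a nonzero log quadratic form, killing log rational connectedness; but now the interior part of the fibre is $3E_1+2E_2$, which is not divisible in $\pic(X)$ by any integer $>1$, so the mod-$p$ obstruction above vanishes and a van Kampen computation (the local contribution of a multiple fibre is governed by the gcd of the multiplicities of its \emph{interior} components, here $\gcd(2,3)=1$) gives $\pi_1(U')=1$; then $q=0$ follows from $\operatorname{rank}H_1(U',\Z)^{free}=0$ via the log Albanese. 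To repair your write-up you would need to replace ``one interior double component per fibre'' by ``two interior components of coprime multiplicities, each at least $2$''; with that change the rest of your argument goes through.
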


\begin{construction}
	Take $U=\A^1\times \A^1$ with the natural projection $\pi$ to the first factor $T=\A^1$. Pick two points $p_1, p_2$ on $T$. For each $i=1,2$, remove $\pi^{-1}(p_i)$ from $U$ and glue back a disjoint union of a triple $\A^1$ and a double $\A^1$. We obtain a new surface $q:U'\to T$. We can achieve this by embedding $U$ into $\P^1\times \P^1$, taking further blow ups and deleting some extra divisors. Take any log smooth model $(X,D)$ with the interior $U'$.
\end{construction}

\proof[Proof of Proposition \ref{prop:e}] First $q:U'\to T$ gives an $\A^1$-ruling over $\A^1$, in particular, we have $\kappa(X,D)=-\infty$. The proof of Lemma \ref{lem:A} implies that $(X,D)$ is not log rationally connected. For the fundamental group of $U'$, an easy calculation via Van Kampen's theorem shows that it is simply connected. Finally by the construction of log Albanese variety \cite{Iitaka77} and the rationality of $X$, we have $$\rank H^0(\Omega_X^1(\log D))= \rank H_1(U',\Z)^{free}.$$
The later is trivial because $\pi_1(U')=0$. Therefore we have $q(X,D)=0$.\qed

%\begin{proposition}
%There exists an $\A^1$-ruled surface pair $(X,D)$ with $\pi_1(U)=0$
%\end{proposition}

%In the first case, we may replace $(X,D)$ by $(X, D+f^{-1}(\infty))$ because the log rational connectedness of the later implies the log rational connectedness of the former. Now we have a morphism  of log pairs$$f:(X,D)\to (\P^1,\{\infty\}),$$where the restriction $f|_U:U\to\A^1$ is surjective. Let $b_1,\cdots,b_k$ be the points on $\A^1$ where $f|_U^{-1}(b_i)$ is not reduced and let $m_i$ be the least common multiple of $c_{ij}$, where $f|_U^{-1}(b_i)=\sum_j c_{ij}F_{ij}$. Let $(X',D')$ be the base change:

%where $g$ is given by the polynomial $(x-b_1)^{m_1}\cdots(x-b_k)^{m_k}$ and $D':=g'^{-1}(D)$. 

\subsection*{Acknowledgments}
The author would like to thank Qile Chen, Tommaso de Fernex, J\'anos Koll\'ar, Jason Starr, Zhiyu Tian and Chenyang Xu for helpful discussions.

\def\MR#1{{\tt MR #1}}

%\address{Department of Mathematics, University of Utah\\
%	Room 233, 155 S 1400 E, Salt Lake City, UT 84112, USA\\
%	\email{yzhu@math.utah.edu}\\
%	\received{January 17, 2015}}

\end{document}